\documentclass[reqno]{article}
\usepackage{amssymb, amsmath, amsthm, amsfonts, amscd}
\usepackage[english]{babel}

\usepackage{stmaryrd}

\usepackage{multicol}
\usepackage{graphicx}
\usepackage{subcaption}

\usepackage{epstopdf}

\usepackage{dsfont}
\usepackage{breqn}
\usepackage{mathtools}
\usepackage{color}
\usepackage{hyperref}
\hypersetup{
    colorlinks=true,
    citecolor=red,
    filecolor=black,
    linkcolor=blue,
    urlcolor=blue
}

\usepackage{algorithm}
\usepackage[noend]{algpseudocode}

\chardef\bslash=`\\ 

\newtheorem{theorem}{Theorem}[section]
\newtheorem{corollary}[theorem]{Corollary}

\newtheorem{proposition}[theorem]{Proposition}

\newcommand{\RR}{\mathcal{R}}

\newcommand{\Z}{\mathbb{Z}}
\newcommand{\Q}{\mathbb{Q}}
\newcommand{\R}{\mathbb{R}}

\newcommand{\C}{\mathbb{C}}

\newcommand{\T}{\mathbb{T}}

\newcommand{\Id}{\mathrm{Id}}

\newcommand{\Conj}{\mathrm{Conj}}
\newcommand{\NF}{\mathrm{NF}}

\newcommand{\phd}{\cite{NKPhD}}

\def\a{\alpha }
\def\g{\gamma }
\def\sm{C^{\infty} }

\def\u{\theta}

\def\t{\tau}

\def\w{\omega}

\def\.{\cdot }
\def\ra{\rightarrow}

\title{
\textsc{\textbf{A note on cocycles in $\mathbb{T}\times SO(3)$}}\\
\author{Nikolaos Karaliolios}
}

\begin{document}

\maketitle

\begin{abstract}
This short note studies $C^{\infty}$-smooth cocycles in $\mathbb{T}\times SO(3)$ that have $0$
degree and are non-homotopic to constants. The study picks up from where the author's PhD thesis
left the subject, and shows that, under a relevant and full measure arithmetic condition, such cocycles can
be conjugated to a simple model. Moreover, under the same arithmetic condition, the cocycle
can be conjugated arbitrarily close to constant cocycles by a $2$-periodic conjugation.
\end{abstract}  

\section{Introduction}

In order to keep this note short and avoid repetition, we will cite heavily our PhD thesis,
\phd, but refer to the arXiv version, available \href{https://arxiv.org/abs/1505.04562}{here},
in order to facilitate the reader. We will refer to works earlier than our PhD thesis on the subject
mainly through reference to \phd, since this note is an appendix to it, and is not
intended to be submitted to a journal. Any lack of citations is due to the author lacking
full access to journals.

The homotopy group of $SO(3)$ is naturally isomorphic to $\Z / 2\Z$. A model for a path in the
non-trivial homotopy class is the rotation by an angle in $[0,2\pi]$ around the z-axis,
\begin{equation} \label{eqgeodesic}
R_{2\pi x} =
    \begin{pmatrix}
        \cos (2\pi x) & -\sin (2\pi x) & 0 \\
        \sin (2\pi x) & \cos (2\pi x) & 0 \\
        0 & 0 & 1
    \end{pmatrix}, x \in [0,1]
\end{equation}
This model corresponds to $E_{1/2}(\.): [0,1] \ra SU(2)$ in the notation of \phd, see p. 92-93, section
"Cocycles in $SU(2)$ and $SO(3)$". This path in $SU(2)$ emanates from the $\Id$ and reaches $-\Id$ in time
$1$, thus not a loop in $SU(2)$. Since, however, $SO(3) \equiv SU(2)/ \{\pm \Id \}$, $E_{1/2}(\.): [0,1] \ra SU(2)$
maps to a closed loop in $SO(3)$ that is non-homotopic to the $\Id$.

The degree is related with the linear growth of the derivatives of the cocycle, see
def. 6.2, as well as theorem 6.18 of \phd\, and references therein for a connection with the theory of cocycles in
$SL(2, \C)$.\footnote{It is easy to see that our calculations generalize in higher dimensions and that the quantization
of the acceleration as proved in \cite{avila2014complex} is the same as the one of the degree as proved in \phd.} A
cocycle over any irrational rotation with
the fiber-wise dynamics defined by eq. \eqref{eqgeodesic} has linear growth and thus non-zero degree.
The existence of $0$-degree cocycles non-homotopic to the $\Id$ is delicate to spot, but an explicit
example can be given by
\begin{equation}
    (\a, A R_{2\pi \.})
\end{equation}
where $\a$ is minimal and $A$ anti-commutes with $R_{2\pi \.}$.

The following arithmetic condition is a variant of the $RDC$ condition, see def. 2.13 of \phd.
Let $\widetilde{DC}(\g, \t)$ be the set of $\a \in \T$ such that
\begin{equation}
    |k\a -1/2|_{\Z} \geq \frac{\g^{-1}}{|k|^{\t}}.
\end{equation}
A simple calculation shows that if $\a \in \widetilde{DC}(\g, \t)$, then $2\a \in DC(\g/2, \t)$, where
$DC(\g, \t)$ is defined as the set of $\a \in \T$ such that
\begin{equation}
    |k\a |_{\Z} \geq \frac{\g^{-1}}{|k|^{\t}}.
\end{equation}

Let also $\mathrm{G}$ be the Gauss map $\a \mapsto \{ \a ^{-1} \}$ where $\{ \. \}$ stands for the fractional part.
Then, we call $\widetilde{RDC}(\g, \t)$ the set of $ \a \in \T \setminus \Q$ such that
$\mathrm{G}^{n}(\a ) \in \widetilde{DC}(\g, \t)$ for infinitely many $n$. This condition is of full Haar measure
for the same reasons as $RDC$. We also note that $\a \in \widetilde{RDC}(\g, \t)$ implies $2\a _n \in DC(\g/2, \t)$
infinitely often, where the continued fractions are those associated to $\a \pmod 1$ and not to $2\a \pmod 1$.

In \S \ref{secproof} we will sketch a proof of the following theorem. We will use notation from \S
2.1.2 and from \S 6, esp. \S 6.3, of \phd.
\begin{theorem} \label{mainthm}
    Let $(\a, A(\.))$ be a $\sm $-smooth cocycle in $\T \times SO(3)$, of $0$ degree and non-homotopic to the
    $\Id$. Suppose, moreover, that $\a \in \widetilde{RDC}$. Then, $(\a, A(\.))$ is $\sm$ almost reducible by
    $2$-periodic conjugations.
\end{theorem}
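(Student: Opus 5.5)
Our plan is to lift to $SU(2)$ and run the renormalization scheme of \S 6 of \phd, but on a double cover of the base circle. Since $A(\.)$ is non-homotopic to the $\Id$, it lifts to a path $\tilde A(\.) : [0,1] \ra SU(2)$ with $\tilde A(x+1) = -\tilde A(x)$, not to a loop. Equivalently, $(\a, \tilde A)$ is a cocycle over the rotation $x \mapsto x+\a$ on $\R / 2\Z$ with an antiperiodic fiber map. After the change of variables $y = x/2$ it becomes an honest cocycle $(\a/2, \hat A(\.))$ on $\T \times SU(2)$ with the symmetry $\hat A(y+1/2) = -\hat A(y)$. The degree is defined through the growth of derivatives, so it is unchanged by lifting and rescaling. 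Hence $(\a/2,\hat A)$ is a $0$-degree $SU(2)$ cocycle, up to the obvious normalization. Conjugations that are $1$-periodic in $y$ are exactly $2$-periodic conjugations in $x$, so almost reducibility of the lifted cocycle in the sense of \phd\ gives the statement of Theorem \ref{mainthm}.

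We then apply the renormalization argument of \S 6.3 of \phd. By the dichotomy there, a $0$-degree cocycle renormalizes to a sequence of pairs of commuting cocycles. After a conjugation, these converge to constant pairs along the continued fraction expansion of the base frequency. The subtlety is that the relevant expansion is that of $\a$, not of $\a/2$. We therefore keep track of the renormalization through the sequence $\a_n = \mathrm{G}^n(\a)$, using the antiperiodic symmetry, which is preserved by conjugations respecting it.

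At renormalization scale $n$, the constant models are of the form $(\a_n, C)$ twisted by $-\Id$ over one period. After renormalization, the small divisors that appear in the local (KAM) step are therefore of the form $|k\a_n - 1/2|_{\Z}$, or, after passing to the double cover, $|k \, 2\a_n|_{\Z}$. This is where the hypothesis enters: $\a \in \widetilde{RDC}$ gives $\a_n \in \widetilde{DC}(\g,\t)$, and hence $2\a_n \in DC(\g/2,\t)$, for infinitely many $n$. For each such $n$ we invoke the local almost reducibility theorem of \phd\ (\S 6 together with \S 2.1.2), with frequency $2\a_n$ under the $DC$ condition. Once the renormalized cocycle is close enough to a constant, this conjugates it arbitrarily close to constants. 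Undoing the renormalization by the standard gluing argument then gives conjugations of the original lifted cocycle. These are $1$-periodic in $y$, hence $2$-periodic in $x$, and they bring it arbitrarily close to constants.

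The main obstacle is the renormalization step. One must check that the convergence of renormalization to constant pairs, proved in \phd\ for genuine $SU(2)$ cocycles, survives the twist by $-\Id$. Concretely, the limiting commuting pairs must be of the form ``constant $\times$ central sign'' and not have a hidden nonzero degree, and the smallness of the renormalized pair must be achieved exactly at the times $n$ where $\a_n \in \widetilde{DC}$. We would first check that the $0$-degree hypothesis rules out the model $E_{1/2}$ appearing in the limit. Then we would adapt the proof of convergence from \phd, replacing the frequency $\a_n$ by the pair $(\a_n, 1/2)$ throughout. The remaining estimates are routine.
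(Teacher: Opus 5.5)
There is a genuine gap at the KAM step, caused by a misidentified limit model.

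Your first paragraph, lifting to $SU(2)$ over $\R/2\Z$ so that $1$-periodic conjugations in $y$ are $2$-periodic in $x$, is a correct reformulation of the conclusion. You must renormalize along the continued fraction of $\alpha$, since $\widetilde{RDC}$ is a condition on $\mathrm{G}^n(\alpha)$ and not on the expansion of $\alpha/2$. That means renormalizing the full action generated by $(1,\cdot)$ and $(\alpha,\cdot)$, which is your ``antiperiodic'' picture.

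As fibered maps of $\R\times SU(2)$, the lifts of these two generators anti-commute. This commutator is invariant under $GL(2,\Z)$ changes of generators and under conjugation. So at every step the limiting constants $C_n,A_n$ lift to an anti-commuting pair: they are rotations by $\pi$ about orthogonal axes, not on a common maximal torus. Normalizing such an action to $((1,\Id),(\alpha_n,\cdot))$ does not give a constant ``twisted by $-\Id$'', since no constant is antiperiodic. It gives a perturbation of the loop $AE_{1/2}(\cdot+\alpha_n/2)$, with $A$ anti-commuting with $E_{1/2}$, and this loop is non-homotopic to the $\Id$ at every scale.

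Hence your plan to use the $0$-degree hypothesis to rule out $E_{1/2}$ in the limit aims at a false statement. This model has $0$ degree precisely because the anti-commutation makes its second iterate constant. The normalized renormalized cocycle can therefore never be brought close to a constant by $1$-periodic conjugations. The local theory around the model, with divisors $|k\alpha_n-1/2|_{\Z}$, yields only the normal form $AE_{1/2}(\cdot)\exp(\{0,z_n\}_{su(2)})$ of Proposition \ref{mainprop}.

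Passing to the double cover of the base gives the frequency $\alpha_n/2$, as in your own first paragraph, not $2\alpha_n$. The frequency $2\alpha_n$, where $DC(\gamma/2,\tau)$ is used, belongs to the second iterate $(2\alpha_n,\NF_2)$. That is the object at scale $n$ which is actually close to constants.

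What is missing is how almost reducibility of this second iterate is carried back to the original cocycle, and this is where the $2$-periodicity comes from. The ``standard gluing argument'' cannot supply it, since it presupposes that the normalized renormalized cocycle itself has been reduced near constants. The paper's argument runs as follows:
\begin{enumerate}
\item Take the reducing conjugations $D_m$ of $(2-2a\alpha,\NF_2)$ to be periodic with respect to the normalized generator $(\alpha,\Id)$.
\item Conjugate the whole renormalized action by $D_m B$; this leaves $(\alpha,\Id)$ unchanged.
\item Invert $\mathrm{CF}(\alpha)$. Because $D_m(\cdot+2-2a\alpha)=D_m(\cdot+2)$, the square of the resulting $(1,\cdot)$ generator is $(2,D_m(\cdot+2)\NF_2(\cdot)D_m^{*}(\cdot))=(2,A_m\exp(U_m(\cdot)))$.
\item Only the index-two subaction generated by $(2,\cdot)$ and $(\alpha,\Id)$ is then close to constants, and normalizing it gives the $2$-periodic almost reduction.
\end{enumerate}
You need this step, or an equivalent bookkeeping of which index-two sublattice is near constants once the renormalization is inverted, in place of the claim that the renormalized cocycle is conjugated close to constants. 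The convergence of renormalization that you single out as the main obstacle is, by contrast, imported directly from \S 6 of \phd.
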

We note that $2$-periodic conjugations arise naturally in the proof. Applying the renormalization machinery
directly to the second iterate of the cocycle avoids introducing conjugations of longer periods, because
all the obvious obstructions to almost reducibility (both homotopy and degree) vanish.
The known procedure shows that, for such a $\sm$ smooth cocycle, and if $2\a \in RDC $, then the cocycle is
almost reducible.

In \cite{HOU2024109943}, the authors proved a semi-local Almost Reducibility for cocycles over all irrational rotations
in compact groups in the real analytic category (denoted by $C^{\w}$), see theorem 3.1 of the reference. Assuming
convergence of renormalization of $0$-degree cocycles to constant actions irrespective of homotopy, and $C^{\w}$
normalization of constant actions as in \S \ref{secproof} of this note, the $\widetilde{RDC}$ arithmetic condition
of theorem \ref{mainthm} can be lifted for $C^{\w}$ smooth cocycles by coupling
the semi-local Almost Reducibility theorem with standard arguments.

The techniques developed in \cite{NKRotVec} seem to be applicable in the $\sm$ category without any arithmetic condition,
and to arbitrary cocycles satisfying a universal closeness-to-constants condition such as the applicability of the Campbell–Hausdorff
formula. We will come back to a more systematic study of the techniques developed in \cite{NKRotVec} in a forthcoming article,
\cite{NKRotVecAll}. We expect to obtain an extension of the definition of the rotation vector as obtained in \cite{NKRotVec}
to all quasi-periodic cocycles in compact groups over minimal rotations in an open set containing constant cocycles, and to
address the classification of $0$-degree cocycles that are homotopic to the $\Id$ up to conjugation.

The proof of theorem \ref{mainthm} will use the following proposition. It is not necessary, but obtains a normal form for
the renormalization representatives without excessive restriction with respect to the arithmetics.

\begin{proposition} \label{mainprop}
    Let $(\a, A(\.))$ be a $\sm $-smooth cocycle in $\T \times SO(3)$, of $0$ degree and non-homotopic to the
    $\Id$. Suppose, moreover, that $\a \in \widetilde{RDC}$. Then, $(\a, A(\.))$ has renormalization representatives
    $\sm$ of the form
    \begin{equation} \label{eqnormformrenorm}
        (\a _n , A E_{1/2}(\.)\exp (\{0,z_n\}_{su(2)}))
    \end{equation}
    where $z_n \in \C$ can be arbitrarily small, and $A$:
        \begin{enumerate}
        \item anti-commutes with $E_{1/2}(\.)$: $Ad(A).E_{1/2}(\.) = E_{1/2}(-\.) $, and
        \item acts on $z_n$ by complex conjugation:
        $Ad(A).\{0,z_n\}_{su(2)} = \{0,\bar{z}_n\}_{su(2)}$
    \end{enumerate}
\end{proposition}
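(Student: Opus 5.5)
The plan is to lift the cocycle to $SU(2)$ and run the renormalization scheme of \S 6 of \phd. Since $A(\.)$ is non-homotopic to the $\Id$, its lift to $SU(2)$ is a path $\tilde A:\R\to SU(2)$ with $\tilde A(\.+1)=-\tilde A(\.)$. Equivalently, $(\a,\tilde A)$ is a cocycle over $\T$ twisted by the central element $-\Id$.

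\textbf{Step 1: renormalization and convergence to a constant action.} I would apply the renormalization machinery of \phd, \S 6.3, to this twisted lift. Because the degree is $0$, the a priori estimates of \S 6 apply verbatim: they involve only derivatives, and derivatives are insensitive to the central twist. They give $C^\infty$ convergence of the renormalized actions to a $\Z^2$-action by pairs of commuting maps. The twist survives in the limit. Under renormalization the pair becomes $(\a_n, \tilde C^{(n)}_1)$, $(1, \tilde C^{(n)}_0)$, with $\tilde C^{(n)}_0=\pm\Id$ up to conjugation, depending on the parity of the continued fraction denominators. In the non-trivial class the limit is therefore a constant pair $(\a_n,\tilde C)$, $(1,-\Id)$. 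Projecting back to $SO(3)$ and undoing the twist amounts to conjugating by the $1$-periodic lift of $E_{1/2}(\.)$. This turns the constant twisted action into $(\a_n, A E_{1/2}(\.))$, and the commutation relations force $A$ to anti-commute with $E_{1/2}$.

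\textbf{Step 2: normalizing the constant and the perturbation.} I would conjugate $A$ within the normalizer of the torus $\{E_{t}\}$. That $A$ anti-commutes means $A$ lies in the non-identity component of the normalizer. Write the renormalization representative as $AE_{1/2}(\.)\exp(F_n(\.))$ with $F_n$ small in $C^\infty$. I would decompose $F_n$ into its diagonal part and its $\{0,z\}_{su(2)}$ part, and Fourier-expand both. The cohomological equations for conjugating away $F_n$ with $B=\exp(Y)$ read
\[ Y(\.+\a_n) - Ad(AE_{1/2}).Y(\.) = F_n + \text{h.o.t.} \]
On the diagonal part $Ad(A)$ acts by $-1$, so the equation becomes $\hat Y(k)(e^{2i\pi k\a_n}+1)=\hat F(k)$. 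The small divisors are $|k\a_n-1/2|_\Z$, and this is exactly where $\widetilde{DC}(\g,\t)$ at the times $n$ given by $\widetilde{RDC}$ is used. Every diagonal mode, including the constant one, can be eliminated, since $e^{0}+1=2\ne0$. On the off-diagonal part, $E_{1/2}(\.)$ shifts frequencies by one, so the mode $\{0,\hat z(k)e^{2i\pi(k+1)\.}\}$ couples to $e^{2i\pi k\a_n}$ twisted by $Ad(A)$, which acts antilinearly (complex conjugation). Pairing modes $k$ and $-k-1$, the divisors are again of the form $k\a_n-1/2$, or $k\a_n$ up to the obstruction. The only resonant term is a single constant $z_n\in\C$ that cannot be removed. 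I then run a KAM/Newton iteration (or a single step plus the standard quadratic estimate) in the small-perturbation regime guaranteed by Step 1, with loss of derivatives controlled by $\t$. The output is $(\a_n, AE_{1/2}(\.)\exp(\{0,z_n\}_{su(2)}))$ with $z_n\to0$ as $n\to\infty$ along the good subsequence.

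\textbf{Step 3: fixing the action of $A$.} Property 1 is already built in. For property 2, $Ad(A)$ on the plane $\{0,z\}$ is a reflection. After a further constant conjugation by some $E_{t}$ (which commutes with $E_{1/2}(\.)$ and rotates $z$), I can choose $A$ so that this reflection is exactly $z\mapsto\bar z$.

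\textbf{Main obstacle.} The delicate step is Step 1 in the non-trivial homotopy class: one must check that the renormalization convergence of \phd, proved for cocycles homotopic to constants, survives the central twist, and identify the limit precisely. The second difficulty is the off-diagonal resonance bookkeeping in Step 2, which determines that exactly one complex parameter $z_n$ remains.
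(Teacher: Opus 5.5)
\textbf{The gap is in Step 1.} Your Steps 2 and 3 follow the paper's route: linearize around the model, get a diagonal equation with a plus sign (divisors $|k\a_n-1/2|_\Z$, no obstruction, so diagonal constants are absorbed), get an off-diagonal equation twisted by complex conjugation, and run a KAM scheme as in theorem 8.1 of \phd. Step 1, however, misidentifies the limit of renormalization, so the model used in Step 2 is asserted rather than derived.

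Since $-\Id$ is central, the pair $(1,-\Id)$, $(\a_n,\tilde C)$ commutes in $SU(2)$ and carries no twist at all. Its image in $SO(3)$ is the constant cocycle $(\a_n,C)$, which is homotopic to the $\Id$. Conjugating by the image of $E_{1/2}(\cdot)$ cannot fix this. Conjugation by any $1$-periodic $B:\T\to SO(3)$ preserves the homotopy class of a cocycle, because $[B]+[A]-[B]=[A]$ in $\pi_1(SO(3))$. Concretely, for $C$ anti-commuting with $E_{1/2}(\cdot)$ you get $E_{1/2}(\cdot+\a_n)\,C\,E_{1/2}(-\cdot)=E_{1/2}(2\cdot+\a_n)\,C$, which is homotopically trivial. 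If $C$ commutes with $E_{1/2}(\cdot)$ you get a constant. Neither is $AE_{1/2}(\cdot)$.

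\textbf{The missing idea} is where the non-trivial class sits in the limit.
\begin{enumerate}
\item Renormalize directly in $SO(3)$, where $((1,\Id),(\a,A(\cdot)))$ is a genuine $\Z^2$-action. This is the setting in which the paper uses \S 6 of \phd, and it yields commuting limit constants $(1,C_n)$, $(\a_n,A_n)$.
\item Lifts to $SU(2)$ of the two generators of such an action commute up to a constant sign $\pm\Id$. This sign is invariant under conjugation and under the $GL(2,\Z)$ changes of generators. It equals $-\Id$ exactly when $A(\cdot)$ is non-homotopic to the $\Id$.
\item Hence the lifts of $C_n$ and $A_n$ anti-commute. The two constants are not on a common maximal torus, and up to conjugation $C_n=\mathrm{diag}(-1,-1,1)$, $A_n=\mathrm{diag}(1,-1,-1)$. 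In particular $C_n$ is never $\pm\Id$ in $SU(2)$, whatever the parity of the denominators.
\item The factor $E_{1/2}(\cdot)$ then comes from normalizing $(1,C_n)$ to $(1,\Id)$ by the \emph{non-closed} path $R_{-2\pi\cdot/2}$, which is half of the generating loop. By the anti-commutation, this turns $(\a_n,A_n)$ into $(\a_n,R_{-2\pi\a_n/2}A_nR_{2\pi\cdot})$, which is the paper's model.
\end{enumerate}

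\textbf{A secondary point on Step 2:} your divisor bookkeeping does not come out as you state. Pairing modes $k$ and $-k-1$ gives a $2\times 2$ system with determinant $e^{2i\pi(2k+1)\a_n}-1$. So the off-diagonal divisors are $|(2k+1)\a_n|_\Z$, not $|k\a_n-1/2|_\Z$, and none of them vanishes for irrational $\a_n$. This is compatible with the statement, which only asks for $z_n$ small. It does not, however, support your claim that exactly one resonant constant survives. You also need an arithmetic hypothesis controlling these odd multiples. The condition $|k\a_n-1/2|_\Z\geq\g^{-1}|k|^{-\t}$ does not provide one by itself, since arbitrarily good approximations of $\a_n$ by rationals with odd denominators are compatible with it.
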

This shows that $0$ degree cocycles that are non-homotopic to the $\Id$ form, under a full measure condition on the
frequency, a one-parameter family in a way that is analogous to the rigidity of regular cocycles in compact groups,
see \S 8 of \phd. We will refer to the expression of eq. \eqref{eqnormformrenorm} as a normal form.

The second iterate of the renormalization representative of cocycles satisfying the assumptions of the theorem
is a cocycle with a closed and remarkably simple form. The proof is by direct calculation.
\begin{corollary}
    The second iterate of the normal form of eq. \eqref{eqnormformrenorm} is
    of the form
    \begin{equation}
        (2\a_n ,E_{1/2}(\a )\exp (\{0,\bar{z}_n\}_{su(2)}).\exp (\{0,e^{2i\pi \. }z_n\}_{su(2)}))
    \end{equation}
    with $z_n$ arbitrarily small.
\end{corollary}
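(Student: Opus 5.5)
The plan is to compute the second iterate directly from the cocycle law
\[
(\a_n, B(\.))^2 = (2\a_n, B(\.+\a_n)B(\.)), \qquad B(\.) = A E_{1/2}(\.)\exp(\{0,z_n\}_{su(2)}),
\]
and then move every factor of $A$ to the left, using the two commutation properties in Proposition \ref{mainprop}. I will take the convention of \phd, in which $E_{1/2}(x)=\exp(\{\pi x,0\}_{su(2)})$ is diagonal. For this path we have $E_{1/2}(x+y)=E_{1/2}(x)E_{1/2}(y)$, and conjugating by it rotates the off-diagonal coordinate:
\[
Ad(E_{1/2}(x)).\{0,z\}_{su(2)}=\{0,e^{2i\pi x}z\}_{su(2)},
\]
up to the sign convention of \phd. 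The frequency appearing inside $E_{1/2}$ in the corollary is written $\a$; I read it as $\a_n$, since only the renormalization frequency can enter there.

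\textbf{Step 1.} Expand the product:
\[
B(\.+\a_n)B(\.) = A E_{1/2}(\.+\a_n)\exp(\{0,z_n\})\, A E_{1/2}(\.)\exp(\{0,z_n\}).
\]
I will push the inner $A$ to the left through $\exp(\{0,z_n\})$ and then through $E_{1/2}(\.+\a_n)$. Property 2 gives $\exp(\{0,z_n\})A = A\exp(\{0,\bar z_n\})$, and property 1 gives $E_{1/2}(y)A = AE_{1/2}(-y)$. The product therefore becomes
\[
A^2\, E_{1/2}(-\.-\a_n)\exp(\{0,\bar z_n\})E_{1/2}(\.)\exp(\{0,z_n\}).
\]

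\textbf{Step 2.} Reduce $A^2$. Property 1 forces $Ad(A)$ to reverse the diagonal direction, so $A$ is a rotation by $\pi$ in $SO(3)$. Hence $A^2=\Id$ in $SO(3)$, and $A^2=\pm\Id$ in $SU(2)$, which is harmless in $SO(3)$.

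\textbf{Step 3.} Reorder the remaining factors. Write $E_{1/2}(-\.-\a_n)=E_{1/2}(-\a_n)E_{1/2}(-\.)$, and use
\[
E_{1/2}(-\.)\exp(\{0,\bar z_n\})E_{1/2}(\.)=\exp(\{0,e^{-2i\pi\.}\bar z_n\}).
\]
This turns the product into
\[
E_{1/2}(-\a_n)\exp(\{0,e^{-2i\pi\.}\bar z_n\})\exp(\{0,z_n\}).
\]

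\textbf{Step 4.} Match this with the stated form. Up to the sign conventions fixed in \phd\ (the orientation of $E_{1/2}$, or equivalently swapping $A$ with $A^{-1}$), this is exactly the claimed expression, possibly after relabelling $z_n\leftrightarrow\bar z_n$. A residual factor $\exp(\{0,\cdot\})$ can be conjugated away by a constant in the diagonal torus, and such a conjugation preserves smallness of $z_n$.

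The main obstacle is keeping the sign and phase conventions consistent, namely the sign in $E_{1/2}(\pm\.)$ and whether the phase is $e^{\pm 2i\pi\.}$. To settle it, I would fix the explicit matrices from \S 6.3 of \phd\ and verify the conjugation identity by a single $2\times2$ computation in $SU(2)$ before assembling the product. Smallness of $z_n$ is inherited directly from Proposition \ref{mainprop}.
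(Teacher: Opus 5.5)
Your Steps 1--3 are correct. With $Ad(E_{1/2}(x)).\{0,z\}_{su(2)}=\{0,e^{2i\pi x}z\}_{su(2)}$, the literal second iterate is
\[
\bigl(2\a_n,\ \pm E_{1/2}(-\a_n)\exp(\{0,e^{-2i\pi\.}\bar z_n\}_{su(2)})\exp(\{0,z_n\}_{su(2)})\bigr).
\]
The gap is in Step 4. This is \emph{not} the stated expression, and the fixes you invoke do not turn it into it.

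Whatever the conventions, $B(\.+\a_n)B(\.)$ ends with the constant factor $\exp(\{0,z_n\}_{su(2)})$, because that is the last factor of $B$. The target instead ends with the $\.$-dependent factor $\exp(\{0,e^{2i\pi\.}z_n\}_{su(2)})$. Reversing the orientation of $E_{1/2}$ and relabelling $z_n\leftrightarrow\bar z_n$ at best produces $\exp(\{0,e^{2i\pi\.}z_n\}_{su(2)})\exp(\{0,\bar z_n\}_{su(2)})$, that is, the two off-diagonal factors in the opposite order. These factors do not commute: their commutator is a diagonal element of order $|z_n|^2$, not identically zero in $\.$. So the two cocycles differ at second order.

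A constant diagonal conjugation $E_{1/2}(\theta)$ cannot repair this either. It only multiplies every off-diagonal coefficient by the same phase $e^{2i\pi\theta}$, so it cannot remove or reorder an $\exp(\{0,\cdot\}_{su(2)})$ factor.

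The missing ingredient is a \emph{non-constant} conjugation. The map $E_{1/2}(\.)$ is $1$-periodic as a map into $SO(3)$, since it is the loop $R_{2\pi\.}$ of eq. \eqref{eqgeodesic}. Conjugating the cocycle over $2\a_n$ by it is therefore legitimate. Using $E_{1/2}(\.+2\a_n)E_{1/2}(-\a_n)=E_{1/2}(\a_n)E_{1/2}(\.)$ together with your Step 3 identity, one gets
\[
E_{1/2}(\.+2\a_n)\,E_{1/2}(-\a_n)\exp(\{0,e^{-2i\pi\.}\bar z_n\}_{su(2)})\exp(\{0,z_n\}_{su(2)})\,E_{1/2}(-\.)
=E_{1/2}(\a_n)\exp(\{0,\bar z_n\}_{su(2)})\exp(\{0,e^{2i\pi\.}z_n\}_{su(2)}).
\]
This is exactly the stated form, with the same $z_n$, and it also explains why the constant is $E_{1/2}(\a_n)$ rather than $E_{1/2}(-\a_n)$. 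So ``is of the form'' in the corollary must be read as ``is conjugate, by $E_{1/2}(\.)$, to''. Your proof should end with this explicit conjugation rather than an appeal to sign conventions.
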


We remind that or the purpose of proving
theorem \ref{mainthm}, closeness to constants is sufficient and does not require this closed form expression.
Closeness to constants follows directly from the convergence of renormalization to constant actions and the uniformity
of the normalization operator and the fact that the second iterate is studied, without any need for longer iterations.

This form is close to the KAM-normal form as it was used in \S 4 of \cite{NKRotVec}. The difference
is in the term $\exp (\{0,\bar{z}_n\}_{su(2)})$ which is non-diagonal, and, as a consequence, the reduction scheme
of the reference can be applied with explicit calculations only once in order to reduce the cocycle to a perturbation with
Fourier spectrum in $[-2,2]$ and size $O(|z_n|^2)$. In the absence of the term
$\exp (\{0,\bar{z}_n\}_{su(2)})$, the secondd iterate would actually be $\sm$
reducible with an explicit conjugation. In any case, the following corollary is immediate.
\begin{corollary} \label{cormain}
    Such a cocycle admits infinitely many renormalization representatives whose second iterate is almost reducible by
    sequences of $1$-periodic conjugations.
\end{corollary}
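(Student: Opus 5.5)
The plan is to start from the previous corollary and apply the local almost reducibility machinery of \phd\ (and the KAM scheme of \S 4 of \cite{NKRotVec}) to the second iterate. By Proposition \ref{mainprop}, for infinitely many $n$ the cocycle $(\a,A(\.))$ has a renormalization representative of the form \eqref{eqnormformrenorm}, with $z_n$ as small as we like as $n \to \infty$. The preceding corollary gives its second iterate explicitly:
\begin{equation*}
(2\a_n ,E_{1/2}(\a )\exp (\{0,\bar{z}_n\}_{su(2)}).\exp (\{0,e^{2i\pi \. }z_n\}_{su(2)})).
\end{equation*}
This is a cocycle over the rotation $2\a_n$. Its fiber map is a constant $E_{1/2}(\a)\exp(\{0,\bar z_n\}_{su(2)})$ multiplied by a perturbation $\exp(\{0,e^{2i\pi\.}z_n\}_{su(2)})$ that is $1$-periodic and small in every $C^s$ norm; the bounds depend only on $|z_n|$ and $s$, since the perturbation is a trigonometric polynomial of degree one.

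First, I will record the arithmetic input. As noted after the definition of $\widetilde{RDC}$, $\a\in\widetilde{RDC}(\g,\t)$ implies $2\a_n\in DC(\g/2,\t)$ for infinitely many $n$. I restrict to this infinite set of $n$ and intersect it with the set of $n$ for which Proposition \ref{mainprop} applies. Both statements are phrased along the same Gauss-map iterates, so I expect the two sets to coincide up to finitely many indices.

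Second, I will invoke the local almost reducibility theorem in the $\sm$ category for cocycles over Diophantine rotations close to constants. This is the KAM theorem of \phd\ for compact groups, which is uniform in the Diophantine constants $(\g/2,\t)$. Its smallness threshold $\epsilon_0(\g/2,\t,s_0)$ depends only on these constants and on a fixed number $s_0$ of derivatives, and in particular not on $n$. Since $|z_n|\to 0$ along the chosen subsequence, the $C^{s_0}$ distance of the second iterate to the constant $E_{1/2}(\a)\exp(\{0,\bar z_n\}_{su(2)})$ eventually falls below $\epsilon_0$. The theorem then gives $1$-periodic conjugations $B_k(\.)$ that bring the second iterate arbitrarily close to constants in $C^\infty$, which is almost reducibility by $1$-periodic conjugations.

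The main obstacle I expect is this uniformity. The KAM theorem must hold with a threshold independent of the constant part, even though the constant $E_{1/2}(\a)\exp(\{0,\bar z_n\}_{su(2)})$ is non-diagonal and may be resonant with $2\a_n$. I would handle this as in \phd: conjugate the constant into the maximal torus with a fixed conjugation, and deal with resonances through the reduction-of-resonances step of the KAM scheme, which uses only $2\a_n\in DC$. If explicit control is wanted, one step of the scheme of \cite{NKRotVec} can be carried out by hand first. This reduces the perturbation to size $O(|z_n|^2)$ with Fourier spectrum in $[-2,2]$, after which the standard scheme applies.
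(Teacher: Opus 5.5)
Your proposal follows the paper's argument. The second iterate of the normal form \eqref{eqnormformrenorm} is a $1$-periodic cocycle over $2\a_n$ that is $O(|z_n|)$-close in every $C^s$ norm to a constant. The local $\sm$ almost reducibility theory then applies, uniformly in the constant and in $n$ once the Diophantine constants of $2\a_n$ are fixed, and optionally after the single explicit step of the scheme of \cite{NKRotVec}; this is all the paper means by ``immediate''.

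The step you should argue rather than ``expect'' is the choice of indices. The two index sets do not coincide. Proposition \ref{mainprop} produces the normal form at those $n$ with $\a_n\in\widetilde{DC}(\g,\t)$, and you need $2\a_n\in DC$ at these same $n$. Be aware that the implication $\widetilde{DC}(\g,\t)\Rightarrow 2\a\in DC(\g/2,\t)$, which you import from the paper, is false in general. What is true is the converse, $2\a\in DC(\g',\t)\Rightarrow \a\in\widetilde{DC}(2\g',\t)$. For a counterexample, a small generic $\a$ lies in $\widetilde{DC}(4,\t)$ for $\t>1$, while $|2\a|_{\Z}=2\a$ is arbitrarily small. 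So, for your argument as for the paper's, you should really select $n$ with $\a_n\in DC(\g,\t)$, e.g.\ under the full-measure condition $\a\in RDC$. This yields both $\a_n\in\widetilde{DC}$ and $2\a_n\in DC$ with comparable constants.
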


Assuming proposition \ref{mainprop}, we can now prove theorem \ref{mainthm}.
\begin{proof}[Proof of theorem \ref{mainthm}]
    By proposition \ref{mainprop}, the cocycles satisfying the hypotheses of the proposition admit renormalization
    representatives in Normal Form. Suppose that this can be achieved after one step of renormalization, the general
    case following directly. Then, the action associated to the cocycles itself, i.e.
    \begin{equation}
         \Phi = \begin{pmatrix}
             (1, \Id) \\
             (\a , A (\. ))
         \end{pmatrix}
    \end{equation}
    is transformed by the $SL(2,\Z)$ matrix
    \begin{equation}
         \mathrm{CF}(\a) = \begin{pmatrix}
             a & 1 \\
             1 & 0 
         \end{pmatrix}
         =
         \begin{pmatrix}
             \lfloor \a ^{-1} \rfloor & 1 \\
             1 & 0 
         \end{pmatrix}
    \end{equation}
    to the action
    \begin{equation}
         \RR_{1} \Phi = \mathrm{CF}(\a)_{*}\Phi = \begin{pmatrix}
             (\a , A (\. )) \\
             (1-a\a , A_{-a}(\. ))
         \end{pmatrix}
    \end{equation}
    We note that we have omitted the change of scale for simplicity. Let $B: \R \ra SO(3)$ be the conjugation that
    normalizes the action $\RR_{1} \Phi$ to the normal form, denoted by $\NF (\.) $:
    \begin{equation}
        \Conj _{B}\RR_{1} \Phi = \begin{pmatrix}
             (\a , \Id) \\
             (1-a\a , \NF (\. ))
         \end{pmatrix}
    \end{equation}
    where $B(\. + \a).A(\.).B^{*}(\.) = \Id$, $\NF (\. )$ is $\a$-periodic, and, actually, by a simple calculation,
    \begin{equation}
        NF(\.) = B(\. +1 -n\a)B^{*}(\. -n\a)= B(\. +1 )B^{*}(\. )
    \end{equation}
    This last expression can be verified, as a sanity check, to be $\a$-periodic as a consequence of the $1$-periodicity
    of $A(\. )$ and the normalization property of $B(\. )$.
    
    By corollary \ref{cormain}, the cocycle
    \begin{equation}
        (2-2a\a , \NF_2 (\. )) = (2-2a\a , \NF (\. + 1-a\a)\NF (\. )) = (2-2a\a , \NF (\. + 1)\NF (\. ))
    \end{equation}
    is almost reducible by $\a$-periodic conjugations.
    Suppose that $D_m(\.)$ is such a conjugation satisfying
    \begin{equation}
        D_{m} (\.+2-2a\a ) \NF_{2} (\. ) D_{m}^{*}(\.) = A_m \exp (U_m(\.))
    \end{equation}
    with $U_m(\.)$ arbitrarily small. By the $\a$-periodicity of $D_m(\.)$, it also holds that
    \begin{equation}
        D_m(\.+ 2) \NF_{2} (\. ) D_m^{*}(\.) = A_m \exp (U_m(\.))
    \end{equation}
    Consider, now, the action
    \begin{equation}
        \Conj _{D_m.B}\RR_{1} \Phi = \begin{pmatrix}
             (\a , \Id) \\
             (1-a\a , D_m(\.+ 1) \NF (\. ) D_m^{*}(\.))
         \end{pmatrix}
    \end{equation}
    Inversion of the first step of renormalization yields the action
    \begin{equation}
       \mathrm{CF}(\a)^{*} \Conj _{D_m.B}\RR_{1} \Phi = \begin{pmatrix}
             (1 ,  D_m(\.+ 1) \NF (\. ) D_m^{*}(\.)) \\
             (\a , \Id)
         \end{pmatrix}
    \end{equation}
    Passing on to two-periodic actions gives the action generated by
    \begin{equation}\begin{pmatrix}
             (2 ,  D_m(\.+ 2) \NF_2 (\. ) D_m^{*}(\.)) \\
             (\a , \Id)
         \end{pmatrix}
         =
         \begin{pmatrix}
             (2 ,  A_m \exp (U_m(\.))) \\
             (\a , \Id)
         \end{pmatrix}
    \end{equation}
    This action can be normalized to an action that is arbitrarily close to a constant, thus proving the theorem.
\end{proof}

It is noted that if one applies the matrix $\mathrm{CF}(\a)$ to the action generated by
    \begin{equation}
         \begin{pmatrix}
             (1, \Id) \\
             (2\a , A_2 (\. ))
         \end{pmatrix}
    \end{equation}
which would seem a natural idea for studying the second iterate of the cocycle, and then conjugate by
the normalizing conjugation $B(\.)$, they obtain the action
    \begin{equation}
        \begin{pmatrix}
             (2\a , \Id )\\
             (1-2n\a , \NF (\. ))
         \end{pmatrix}
    \end{equation}
which is quite remarkable. Following the calculations of the proof and inverting the renormalization only
yields $2$-periodic almost reducibility for the second iterate, which is the flavor of the results obtained
in \phd. The proof of theorem \ref{mainthm} can be applied to all results obtained in \phd \,giving the same
type of improvement: if $m$ iterations are needed to lift the obstruction due to homotopy, then the cocycle
is almost reducible to constant cocycles or the relevant normal form $\bmod m$.

\textbf{Acknowledgement} The author would like to kindly thank Xuanji Hou, Yi Pan and Qi Zhou for reminding him the subject. 

\section{Proof of proposition \ref{mainprop}} \label{secproof}

\subsection{The model of the dynamics}
The heavy lifting for obtaining the model of the dynamics was done in \S 6 of \phd.
For cocycles of $0$ degree, renormalization converges uniformly to constant actions, and the constants
of the actions commute by lemma 6.26 of \phd. In order to fix notation, let the constants at the step
$n$ be
\begin{equation}
    \begin{pmatrix}
        (1,C_n)\\
        (\a_n , A_n)
    \end{pmatrix}
\end{equation}
The constants cannot be on the same maximal torus. Assuming one constant to be diagonal
in $SO(3)$, which can be obtained by applying a constant conjugation, the other constant is diagonal too,
and, again up to a constant conjugation, the model is $C_n = \mathrm{diag}(-1,-1,1)$, and
$A_n = \mathrm{diag}(1,-1,-1)$. Conjugation by $R_{-2\pi \. /2}$ and use of the anti-commutation relation gives an
action which is a perturbation of
\begin{equation}
    \begin{pmatrix}
        (1,\Id)\\
        (\a_n , R_{-2\pi \a_n /2}A_n R_{2\pi \.})
    \end{pmatrix}
\end{equation}
Normalization gives the cocycle
\begin{equation}
    (\a_n , R_{-2\pi \a_n /2}A_n R_{2\pi \.} \exp(U_n(\.)))
\end{equation}

We will fix such a cocycle, assuming that $\a \in \widetilde{RDC}$, and drop the subscript $n$ for the remaining
of the proof. For ease of notation in the calculations, we will use the $SU(2)$ version of the cocycle, which
reads, with subscripts dropped,
\begin{equation}
    (\a , A E_{1/2}(\. + \a/2)\exp (U(\. )))
\end{equation}
where
\begin{align}
    A &= \begin{pmatrix}
        0 & 1 \\
        -1 & 0
        \end{pmatrix}
    \\
        E_{r} (\.) &= \begin{pmatrix}
        e^{2i\pi \.} & 0 \\
        0 & e^{2i\pi \.}
    \end{pmatrix}
    \\
    U(\. ) &= \{ U_{t}(\. ),U_{z}(\.)\}_{su(2)}\in \sm (\T, su(2))
\end{align}

\subsection{Local theory of the model}

The heavy lifting for this section was done in \cite{Krik2001}, but we will refer to \S 8 of \phd. 
Assuming that $U(\. ) $ is small enough, the smallness depending on the constants of the $\widetilde{RDC}$ condition,
and assuming a small conjugation $B(\. ) = \{ B_{t}(\. ),B_{z}(\.)\}_{su(2)}$, the local reduction equation is
\begin{align}
    B_{t} (x+\a) + B_{t}(x) &= -U_{t}(x) \\
    e^{-2i\pi x}B_{z} (x+\a) - \bar{B}_{z}(x) &= -U_{z}(x)
\end{align}

The difference between the equations analyzed in the reference, eqs. (8.1, 8.2) of \phd\, are the plus sign in the first one, which results
in the different arithmetic condition and in the absence of obstruction, and the presence of a complex conjugate in
the second equation, which does not change anything significant. A proof analogous to the one of theorem 8.1 of \phd\,
gives a cocycle as the one in the conclusion of proposition \ref{mainprop}, since diagonal constants can be absorbed by the
absence of obstruction in the diagonal direction.

This concludes the proof of the proposition \ref{mainprop}.

\subsection{Erratum regarding \S 8 of \phd}

It seems that the counting of codimensions in corollary 8.2 and theorem 8.3 of \phd \, did not take into account the
action of conjugations by diagonal constants on the obstruction $P$, which should lower the codimension as stated
in the theorems by $1$. For example, the obstruction of degree $1$ cocycles in $SU(2)$, whose normal form is
$(\a, E_{1}(\.))$ is of the form $z_{0}+ z_{1}e^{2i\pi \.}$ in the non-diagonal direction, and thus of real dimension
$4$ as stated in theorem 8.4, which is wrong. Conjugation by $\mathrm{diag}(e^{-2i\pi \u /2}, e^{2i\pi \u /2})$ where
$\u = \frac{\arg z_{0}}{2\pi}$ brings the obstruction to the form $|z_{0}|+ z_{1}'e^{2i\pi .}$, which is of real dimension
$3$.

\bibliography{nikosbib}
\bibliographystyle{aomalpha}

\end{document}